\newtheorem{theorem}{Theorem}[section]
\newtheorem{lemma}{Lemma}[section]
\newtheorem{cor}{Corollary}[section]
\newtheorem{prop}{Proposition}[section]
\title{The Dynamics of the Fibonacci Partition Function}
\author{Tom Kempton}
\begin{document}

\maketitle
\begin{abstract}
 In this article we study the local structure of the Fibonacci Partition Function by relating it to a cocycle over an irrational rotation. 
\end{abstract}
\section{Introduction}
In this article we are interested in partitions of natural numbers $n$ of the form
\[
n=x_1+\cdots +x_s
\]
where $x_1<x_2<\cdots <x_s$ are distinct Fibonacci numbers. We let $R(n)$ denote the number of such partitions of $n$ and can easily compute, for example, R(6)=2, since $6$ can be expressed as $5+1$ or as $3+2+1$. The sequence $(R(n))_{n=0}^{\infty}$ begins $1, 1, 1, 2, 1, 2, 2, 1, 3, 2, 2, 3, 1, 3$ and appears as sequence A000119 in the Online Encyclopedia of Integer Sequences (OEIS). This sequence has attracted a good deal of interest over the years, see for example \cite{Ardila, Berstel, Carlitz, ChowJones, EdsonZamboni, HoggattBasin,Klarner,Weinstein,Zhou}. Our interest in the sequence stems from an article of Alexander and Zagier \cite{AlexanderZagier}, where it appears in relation to Bernoulli convolutions. Recently, Chow and Slattery \cite{ChowSlattery} gave an exact formula for $R(n)$, whose behaviour they describe as erratic.

In this article, we show that $R(n)$ can be expressed in terms of ergodic sums over an irrational rotation. This result readily yields results about the local multiplicative structure of $R$ and explains and quantifies the erratic behaviour. Chow and Slattery also proved results about the growth of $A(H):=\sum_{n=0}^H R(n)$, we prove further results in this direction in Section \ref{SlatterySection}.

Let $T:\left[\dfrac{-1}{\varphi^2},\dfrac{1}{\varphi}\right)\to \left[\dfrac{-1}{\varphi^2},\dfrac{1}{\varphi}\right)$ be given by \[T(y)=\left\lbrace\begin{array}{cc}y+\dfrac{1}{\varphi^2}&y\in\left[\dfrac{-1}{\varphi^2},\dfrac{1}{\varphi^3}\right]\\
y+\dfrac{1}{\varphi^2}-1&y\in\left[\dfrac{1}{\varphi^3},\dfrac{1}{\varphi}\right]
\end{array}\right. .\] The function $T$ is a rotation by angle $\frac{1}{\varphi^2}$ where $\varphi=\frac{1+\sqrt{5}}{2}$ is the golden mean. Usually irrational rotations are defined on $[0,1)$, we have chosen a different interval of length $1$ for reasons which will make sense later. 

Let $h:\left(\dfrac{-1}{\varphi^2},\dfrac{1}{\varphi^3}\right)\cup\left(\dfrac{1}{\varphi^3},\dfrac{1}{\varphi}\right)\to \mathbb R$ be the unique continuous function satisfying
\[
h(y)=\left\lbrace \begin{array}{cc}
1+h(-\varphi y)&y \in \left(\dfrac{-1}{\varphi^2},\dfrac{-1}{\varphi^4}\right]\\
1&y\in\left[\dfrac{-1}{\varphi^4},0\right]\\
\dfrac{h(-\varphi y+\frac{1}{\varphi})}{1+h(-\varphi y+\frac{1}{\varphi})}&y\in \left[0,\dfrac{1}{\varphi^3}\right)\\
h(-\varphi y+\dfrac{1}{\varphi})&y\in\left(\dfrac{1}{\varphi^3},\dfrac{1}{\varphi}\right)
\end{array} \right.
\]

The function $h$, pictured in Figure \ref{Fig1}, is a `Devil's staircase', taking constant values on a set of intervals of full Lebesgue measure while being continuous. The following is our main theorem. 
\begin{theorem}\label{Thm1}
We have \[
R(n)=\exp\left(\sum_{k=0}^{n-1} \log(h(T^k(0)))\right)
\]
for all $n\geq 1$.
\end{theorem}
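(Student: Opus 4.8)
The plan is to turn the statement into a one-step multiplicative recursion and then match it, level by level, against the golden-mean renormalization encoded in the functional equation for $h$. Since the right-hand side is the telescoping product $G(n):=\prod_{k=0}^{n-1}h(T^k(0))$ with $G(0)=1$ (empty product) and $R(0)=1$, it suffices to prove
\[
\frac{R(n+1)}{R(n)}=h(T^n(0))\qquad\text{for all }n\ge 0 .
\]
Before anything else I would verify that the orbit $\{T^n(0)\}_{n\ge0}$ never meets the points $-1/\varphi^2,\,1/\varphi^3,\,1/\varphi$ at which $h$ is undefined or discontinuous; since $T^n(0)=\{n/\varphi^2\}$ and $1/\varphi^2$ is irrational, each such coincidence would force some $(n+c)\varphi\in\mathbb Z$, which is impossible, so this is a short computation in $\mathbb Z[\varphi]$. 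The same computation pins down $h(0)=1$, giving the base cases $R(1)=R(0)=1$.

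The heart of the argument is the dynamical meaning of the recursion for $h$. The affine maps $\phi(y)=-\varphi y$ and $\psi(y)=-\varphi y+\tfrac1\varphi$ appearing in the four cases are exactly the golden-mean rescalings underlying the renormalization of the rotation by $1/\varphi^2$: inducing $T$ on the subinterval $\left[0,\tfrac1\varphi\right)$ gives a first-return map whose return times are the two consecutive Fibonacci values, and conjugating this induced map by $\psi$ (which scales by $\varphi$ and reverses orientation) carries it back to $T$; the map $\phi$ plays the analogous role on $\left[-\tfrac1{\varphi^2},0\right)$. I would make this precise through the first-return decomposition of the orbit segment $\{0,T(0),\dots,T^{n-1}(0)\}$, and then read off the symbolic itinerary of $T^n(0)$ with respect to the interior breakpoints $-1/\varphi^4,\,0,\,1/\varphi^3$. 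The outcome I expect is a clean dictionary: the case of the definition of $h$ into which $T^n(0)$ falls is governed by the low-order digits of the Zeckendorf (Ostrowski) representation of $n$, and applying $\phi$ or $\psi$ corresponds to stripping those digits and passing to the next renormalization level.

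On the arithmetic side I would establish the matching renormalization of $R$. Using $F_{k+1}=F_k+F_{k-1}$ together with the generating function $\sum_n R(n)x^n=\prod_k\bigl(1+x^{F_k}\bigr)$, one obtains a recursion for the increment $R(n+1)/R(n)$ in terms of the very same Zeckendorf data, and I expect the four cases to correspond to: (i) an increment of $1$ when passing from $n$ to $n+1$ leaves the representation count unchanged ($h=1$); (ii) a step where a relevant count increases by one and $h=1+h(\phi(y))$; (iii) the dual "carry" step with $h=h(\psi(y))/(1+h(\psi(y)))<1$; and (iv) a pass-through step where the increment equals the renormalized increment, $h=h(\psi(y))$. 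Matching these values against the dictionary of the previous paragraph, the theorem follows by induction on the Fibonacci scale of $n$ (equivalently, on the length of its Zeckendorf word), with the functional equation for $h$ supplying exactly the transfer from one level to the next.

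The main obstacle, I expect, is the careful bookkeeping in this matching: one must track the orientation reversal introduced by the factor $-\varphi$ at each renormalization step, and verify that the two non-trivial increment types — the additive $1+h$ case and its reciprocal-type $h/(1+h)$ case — are assigned to precisely the right intervals, i.e. that the combinatorial carry of adding $1$ in the Fibonacci numeration system places $T^n(0)$ in the interval prescribed by $h$. Establishing this dictionary rigorously (rather than merely checking it for small $n$, as one easily can) is the crux; once it is in place, the induction and the telescoping of the product are routine, and the logarithmic form of the statement is immediate from the positivity of $h$ along the orbit.
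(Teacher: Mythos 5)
Your overall architecture is the right one, and in fact it is the paper's: reduce to the one-step identity $R(n+1)/R(n)=h(T^n(0))$, check the orbit avoids the bad points, and induct on the length of the Zeckendorf word, with the maps $-\varphi y$ and $-\varphi y+\frac{1}{\varphi}$ acting as digit-stripping renormalizations; your expected cases (i)--(iv) correspond exactly to Cases 1--4 in Section 3 of the paper. The genuine gap is at what you yourself flag as the crux: the arithmetic side is asserted, never derived. The claim that the generating function $\prod_k(1+x^{F_k})$ ``yields a recursion for the increment $R(n+1)/R(n)$ in terms of Zeckendorf data'' begs the question, because the increment satisfies no autonomous recursion by itself; what transforms cleanly when a Zeckendorf digit is appended is the \emph{pair} $(R(n),R(n-1))$, projectively. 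Concretely, writing $m$ for the integer whose Zeckendorf word is that of $n$ with digit $b$ appended, one must prove the four identities: for $b=0$ and $y_n\le 0$, $(R(m),R(m-1))=(R(n)+R(n-1),\,R(n-1))$; for $b=0$ and $y_n\ge 0$, $(R(m),R(m-1))=(R(n),R(n-1))$; for $b=1$ and $y_n\le 0$, $(R(m),R(m-1))=(R(n),\,R(n)+R(n-1))$; and for $b=1$ and $0\le y_n\le \frac{1}{\varphi^3}$, $(R(m),R(m-1))=(R(n),R(n))$. The paper obtains these from the transfer matrices $A_0,A_1$ of Proposition 2.2, and that proposition rests on a separation fact quoted from the literature: if $b_1\cdots b_k$ is the Zeckendorf word of $n$ and $a_1\cdots a_k$ is any other Fibonacci representation of $n$, every partial sum $\sum_{i\le j}(b_i-a_i)\varphi^{k+2-i}$ lies in the finite set $\{0,\varphi,\varphi^2\}$. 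This finiteness is what makes a $3\times 3$ transfer possible, and it is also precisely where Zeckendorf-ness (as opposed to an arbitrary representation) is used. Nothing in your generating-function manipulation obviously substitutes for it, so this step is a missing idea rather than deferred bookkeeping.

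The dynamical dictionary also needs an actual proof, and the efficient route is not the first-return renormalization of the rotation you sketch but the Galois-conjugate embedding: one shows $T^n(0)=y_n=\sum_i a_i\psi^{k+2-i}$ (the conjugate of $x_n$), after which appending digit $b$ sends $y_n\mapsto S_b(y_n)=-\frac{y_n}{\varphi}+\frac{b}{\varphi^2}$, the sign of $y_n$ determines which component of the transferred pair vanishes, and the restriction of the fourth case to $\left[0,\frac{1}{\varphi^3}\right)$ falls out of the Zeckendorf constraint that a $1$ must follow a $0$ (this is Proposition 2.1 of the paper: the cut-and-project description of $\overline X$ and the successor map). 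Your renormalization picture is a correct heuristic and could likely be made rigorous, but as written both halves of your ``dictionary'' --- the combinatorial transfer identities and the interval bookkeeping --- are exactly the content of the theorem, so the proposal is a plan whose central lemma is absent rather than a proof.
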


\begin{figure}\label{Fig1}
    \centering
\includegraphics[scale=0.5]{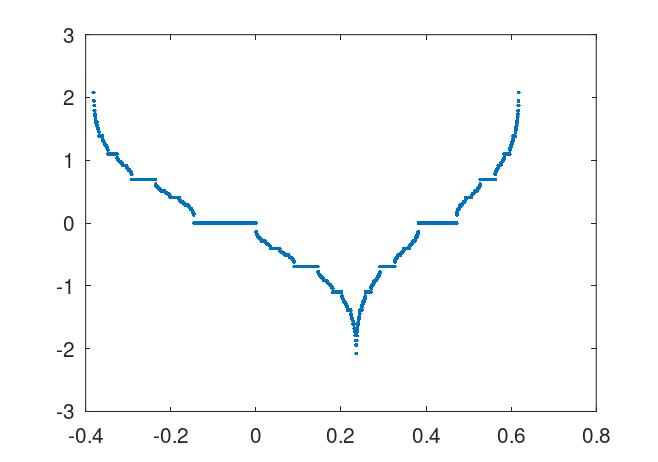}
    \caption{The function $\log(h)$, drawn by plotting $y_n=T^n(0)$ against $\log\left(\frac{R_n+1}{R_n}\right)$ for $0\leq n\leq 2582$. Note that $\log(h)\to\pm\infty$ on the boundary of its domain.}
    \label{fig:enter-label}
\end{figure}

Given a vector $P=(p_1,\cdots p_k)$ where $k\in\mathbb N$ and each $p_i\in\mathbb Q$, we say $R$ contains patch $P$ at time $n$ if $R(n+i)=p_iR(n)$ for each $i\in\{1,\cdots,k\}$. For example, if $P=(1,1)$ then $R$ contains patch $P$ at time $n$ whenever $R(n)=R(n+1)=R(n+2)$.

\begin{theorem}\label{Thm2}
For any patch $P$ the set 
\[
\{n\in\mathbb N: R \mbox{ contains patch $P$ at time $n$} \}
\]
is a cut and project set. 
\end{theorem}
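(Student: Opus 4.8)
The plan is to use Theorem \ref{Thm1} to turn the patch condition into a condition on the orbit of $0$ under the rotation $T$, and then to recognise the resulting set as a model set.

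First I would note that the product formula of Theorem \ref{Thm1} telescopes, giving $R(n+1)/R(n) = h(T^n(0))$ for all $n$. Setting $p_0 = 1$ and $q_j = p_{j+1}/p_j$ for $0 \le j \le k-1$, the requirement $R(n+i) = p_i R(n)$ for $1 \le i \le k$ is then equivalent, after taking ratios of consecutive terms, to the system
\[
h(T^{n+j}(0)) = q_j, \qquad 0 \le j \le k-1.
\]
Hence $R$ contains patch $P$ at time $n$ if and only if $T^n(0) \in W$, where
\[
W := \bigcap_{j=0}^{k-1} T^{-j}\big(h^{-1}(q_j)\big).
\]
The theorem thus reduces to showing that $\{n : T^n(0) \in W\}$ is a cut and project set, which will follow from the standard theory once I know that $W$ is a finite union of intervals.

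The crux is therefore to understand the level sets $h^{-1}(c)$ for rational $c$, and here I would exploit the self-similarity encoded in the recursion defining $h$. The two nontrivial branches are governed by the affine maps $y \mapsto -\varphi y$ and $y \mapsto -\varphi y + 1/\varphi$, both expanding by $\varphi$ and both preserving $\mathbb{Z}[\varphi]$, while the fourth branch exhibits $h$ on $(1/\varphi^3, 1/\varphi)$ as an affine copy of $h$ on $(-1/\varphi^2, 1/\varphi^3)$. On this smaller interval the value of $h$ determines the branch uniquely: a value greater than $1$ can only come from the first branch (which replaces the target $c$ by $c-1$), a value less than $1$ only from the third branch (which replaces $c$ by $c/(1-c)$), and the value $1$ only from the constant middle branch. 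This value dynamics is exactly the slow (subtractive) continued fraction algorithm, which terminates for every rational $c$; tracing it back through the corresponding contractions shows that $h^{-1}(c)$ is a finite union of intervals with endpoints in $\mathbb{Z}[\varphi]$. I expect this bookkeeping — verifying that the recursion neither branches nor fails to terminate — to be the main obstacle, even though the underlying mechanism is clean.

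Finally I would assemble the pieces. Since $T$ is the rotation by $\alpha = 1/\varphi^2$ on the circle of length $1$, each $T^{-j}\big(h^{-1}(q_j)\big)$ is again a finite union of intervals, as is their intersection $W$, whose boundary is finite and hence of measure zero. The set $\{n \in \mathbb{N} : T^n(0) \in W\}$ is then the model set associated to the cut and project scheme with physical space $\mathbb{R}$, internal space the circle $\mathbb{R}/\mathbb{Z}$ on which $T$ acts, lattice $\{(n, n\alpha \bmod 1) : n \in \mathbb{Z}\}$, and window $W$, which is precisely a cut and project set. When some $q_j$ fails to lie in the range of $h$, or the intervals meet only at endpoints, $W$ has empty interior and the set is finite or empty, a degenerate instance of the statement.
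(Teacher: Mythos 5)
Your proposal is correct and takes essentially the same route as the paper: both telescope Theorem \ref{Thm1} to turn the patch condition into the condition $T^n(0)\in W$ for a window $W$ assembled from level sets of $h$, and then identify $\{n:T^n(0)\in W\}$ as a cut and project set. The differences are minor: you supply an argument (the value dynamics $c\mapsto c-1$, $c\mapsto c/(1-c)$, i.e.\ the subtractive Euclidean algorithm) for a fact the paper merely asserts — that each level set of $h$ is empty or a finite union of intervals — and you phrase the scheme with the circle as internal space rather than via the planar lattice $\left\{(n+m\varphi,\,n+m\psi)\right\}$ of Proposition \ref{StructureProp}, which is an equivalent formulation.
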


These two theorems make it very easy to read off statements about the local multiplicative structure of $R$. To illustrate the results, we give the following two corollaries which are more or less immediate.

\begin{cor}\label{Cor1}
The asymptotic density of the set of $n$ for which $R(n)=R(n+1)$ is equal to $\frac{1}{\varphi^4}+\frac{1}{\varphi^5}=\frac{1}{\varphi^3}\approx 0.236.$
\end{cor}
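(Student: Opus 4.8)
The plan is to reduce the statement to a single equidistribution computation for the rotation $T$. First I would extract from Theorem \ref{Thm1} the ratio formula
\[
\frac{R(n+1)}{R(n)}=h(T^n(0)),
\]
which follows by dividing the exponential-sum expressions for $R(n+1)$ and $R(n)$, since all terms except the top one cancel. Consequently $R(n)=R(n+1)$ if and only if $h(T^n(0))=1$, so the set in the statement is exactly $\{n\in\mathbb N: T^n(0)\in E\}$, where $E:=\{y:h(y)=1\}$. The problem thus splits into (i) identifying $E$ and (ii) computing the density of visits of the orbit of $0$ to $E$.

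For step (i) I would read off $E$ from the defining recursion for $h$, using throughout that $h>0$ (each value $h(T^n(0))=R(n+1)/R(n)$ is a ratio of positive integers, and $h$ is continuous with the orbit of $0$ dense). On $\left[-\tfrac{1}{\varphi^4},0\right]$ the recursion gives $h\equiv 1$. On $\left(-\tfrac{1}{\varphi^2},-\tfrac{1}{\varphi^4}\right]$ we have $h=1+h(-\varphi y)>1$, and on $\left[0,\tfrac{1}{\varphi^3}\right)$ we have $h=\tfrac{a}{1+a}$ with $a=h(-\varphi y+\tfrac{1}{\varphi})>0$, hence $h<1$; so neither interval contributes to $E$. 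On the last interval $\left(\tfrac{1}{\varphi^3},\tfrac{1}{\varphi}\right)$ the recursion reads $h(y)=h(\psi(y))$ with $\psi(y)=-\varphi y+\tfrac{1}{\varphi}$, an orientation-reversing affine bijection onto $\left(-\tfrac{1}{\varphi^2},\tfrac{1}{\varphi^3}\right)$, so $h(y)=1$ there precisely when $\psi(y)\in\left[-\tfrac{1}{\varphi^4},0\right]$, i.e. for $y\in\left[\tfrac{1}{\varphi^2},\tfrac{1}{\varphi^2}+\tfrac{1}{\varphi^5}\right]$. Collecting these, $E=\left[-\tfrac{1}{\varphi^4},0\right]\cup\left[\tfrac{1}{\varphi^2},\tfrac{1}{\varphi^2}+\tfrac{1}{\varphi^5}\right]$, a union of two intervals of lengths $\tfrac{1}{\varphi^4}$ and $\tfrac{1}{\varphi^5}$.

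For step (ii) I would convert the Lebesgue measure $|E|=\tfrac{1}{\varphi^4}+\tfrac{1}{\varphi^5}=\tfrac{1}{\varphi^3}$ (using $\varphi+1=\varphi^2$) into an asymptotic density. Since $T$ is a rotation by the irrational angle $\tfrac{1}{\varphi^2}$, it is uniquely ergodic for normalized Lebesgue measure on the length-one interval, so the Birkhoff averages $\tfrac{1}{N}\sum_{n=0}^{N-1}\mathbf{1}_E(T^n(0))$ converge to $|E|$ even for the single orbit of $0$, provided $E$ is a continuity set (that is, $\partial E$ has measure zero). Here $\partial E$ is a set of four points, so this holds immediately and the density equals $\tfrac{1}{\varphi^3}$.

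The only genuinely delicate part is the identification of $E$: the recursion for $h$ is self-referential on $\left(\tfrac{1}{\varphi^3},\tfrac{1}{\varphi}\right)$, so one must track the change of variables $\psi$ correctly and check that the preimage interval $\left[\tfrac{1}{\varphi^2},\tfrac{1}{\varphi^2}+\tfrac{1}{\varphi^5}\right]$ really lies inside $\left(\tfrac{1}{\varphi^3},\tfrac{1}{\varphi}\right)$, which rests on the identities $\tfrac{1}{\varphi}-\tfrac{1}{\varphi^2}=\tfrac{1}{\varphi^3}$ and $\tfrac{1}{\varphi^5}<\tfrac{1}{\varphi^3}$. Everything else is either elementary bookkeeping or a standard appeal to equidistribution.
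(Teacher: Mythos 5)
Your strategy is exactly the paper's: use Theorem \ref{Thm1} to get $R(n+1)/R(n)=h(T^n(0))$, identify the level set $E=\{h=1\}$ as $\left[-\frac{1}{\varphi^4},0\right]\cup\left[\frac{1}{\varphi^2},\frac{1}{\varphi^2}+\frac{1}{\varphi^5}\right]$, and invoke unique ergodicity of the rotation (your continuity-set remark and the computation of the preimage of $\left[-\frac{1}{\varphi^4},0\right]$ under $y\mapsto-\varphi y+\frac{1}{\varphi}$ are both correct, and match what the paper asserts without proof). So this is the same route, carried out in more detail than the paper; the extra detail, however, contains one genuine gap.

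The gap is your justification of $h>0$, which you flag as being used ``throughout'' and which is genuinely load-bearing on the branch $\left(-\frac{1}{\varphi^2},-\frac{1}{\varphi^4}\right]$, where you conclude $h=1+h(-\varphi y)>1$, and again implicitly in the analysis of the fourth branch. Positivity of $h$ on the dense orbit of $0$ together with continuity only yields $h\geq 0$: a continuous function can vanish at a point while being positive on a dense set, and here one cannot do better cheaply, since $\inf h=0$ (the paper notes $\log h\to-\infty$ at the boundary of the domain). If $h$ vanished at some point of $\left[\frac{1}{\varphi^3},\frac{1}{\varphi}\right)$, then $\{h=1\}$ would acquire extra points in $\left(-\frac{1}{\varphi^2},-\frac{1}{\varphi^4}\right)$ and, via the fourth branch, in $\left(\frac{1}{\varphi^3},\frac{1}{\varphi}\right)$, so your identification of $E$ is not yet proved. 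One clean fix: on the positive part of the domain both recursive branches refer $h$ to $\rho(y)=-\varphi y+\frac{1}{\varphi}$, an affine expanding map whose unique fixed point is $\frac{1}{\varphi^3}$, a point excluded from the domain; since $|\rho^n(y)-\frac{1}{\varphi^3}|=\varphi^n|y-\frac{1}{\varphi^3}|$ and $\rho$ maps $\left(0,\frac{1}{\varphi}\right)$ into $\left(-\frac{1}{\varphi^2},\frac{1}{\varphi}\right)$, every recursion orbit eventually lands in $\left(-\frac{1}{\varphi^2},0\right]$, where $h\geq 1$ (using $h\geq0$), and pulling back through the maps $x\mapsto x$ and $x\mapsto\frac{x}{1+x}$ preserves strict positivity; hence $h>0$ on its whole domain and your case analysis stands. (Alternatively, one can argue that the potential exceptional points lie in a closed Lebesgue-null set, which a uniquely ergodic orbit visits with density zero, so the stated density is unaffected either way.) With that repair your proof is complete and agrees with the paper's.
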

\begin{proof}
We see that $R(n)=R(n+1)$ whenever $h(T^n(0))=1$, which happens whenever $T^n(0)\in\left[\dfrac{-1}{\varphi^4},0\right]\cup \left[\dfrac{1}{\varphi^2},\dfrac{1}{\varphi^2}+\dfrac{1}{\varphi^5}\right]$. The irrational rotation $T$ is uniquely ergodic and so every orbit equidistributes with respect to Lebesgue measure, entering the region above with frequency equal to the sum of their lengths, i.e. $\frac{1}{\varphi^4}+\frac{1}{\varphi^5},$ as required.
\end{proof}
Theorem \ref{Thm2} really gives finer information than Corollary \ref{Cor1} on the structure set of $n$ for which $R(n)=R(n+1)$, for example one could express the set in terms of substitution sequences, or write down a speed of convergence for the statement in Corollary \ref{Cor1}.
\begin{cor}\label{Cor2}
The largest $k\in\mathbb N$ for which there exist $n\in\mathbb{N}$ such that the sequence $R(n), R(n+1),\cdots R(n+k)$ is increasing is $k=3$. This occurs uniquely at $n=0$, there do not exist four other consecutive terms of the sequence upon which $R$ is increasing.
\end{cor}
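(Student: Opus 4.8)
The plan is to convert the statement about monotonic runs of $R$ into a statement about how long the orbit $(T^n(0))_{n\ge 0}$ can remain inside a fixed region of the circle. By Theorem \ref{Thm1} we have $R(n+1)/R(n)=h(T^n(0))$, and since $R(n)>0$ for every $n$, the sequence fails to decrease from step $n$ to step $n+1$ precisely when $h(T^n(0))\ge 1$. Writing $E:=\{y: h(y)\ge 1\}$, I would first read off $E$ directly from the defining recursion for $h$: on the first two branches $h\ge 1$ (with $h=1$ on $[-1/\varphi^4,0]$), on the third branch $h$ has the form $a/(1+a)<1$, and on the fourth branch $h(y)=h(-\varphi y+1/\varphi)$ reduces to the first two cases, placing $y$ in $E$ exactly when $-\varphi y+1/\varphi\le 0$, i.e. $y\ge 1/\varphi^2$. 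This yields $E=(-1/\varphi^2,0]\cup[1/\varphi^2,1/\varphi)$. A non-decreasing run $R(n)\le \cdots \le R(n+k)$ of $k+1$ terms is then equivalent to the $k$ consecutive orbit points $T^n(0),\dots,T^{n+k-1}(0)$ all lying in $E$, so the corollary becomes a question about the maximal number of consecutive visits of the orbit of $0$ to $E$.

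The heart of the argument is the exact computation of $E\cap T^{-1}E\cap T^{-2}E$, carried out by hand using the two branches of $T$ (rotation by $1/\varphi^2$). For $y\in(-1/\varphi^2,0]$ one has $Ty=y+1/\varphi^2\in(0,1/\varphi^2]$, which meets $E$ only at the single point $1/\varphi^2$, forcing $y=0$; for $y\in[1/\varphi^2,1/\varphi)$ one has $Ty=y+1/\varphi^2-1\in[-1/\varphi^3,0)\subset E$, so the whole interval survives. Hence $E\cap T^{-1}E=\{0\}\cup[1/\varphi^2,1/\varphi)$. Applying $T$ once more sends the surviving interval $[1/\varphi^2,1/\varphi)$ into $[1/\varphi^4,1/\varphi^2)$, which is disjoint from $E$, so that $E\cap T^{-1}E\cap T^{-2}E=\{0\}$ exactly. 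Thus $0$ is the only point whose first three iterates all lie in $E$. Since $T^n(0)\equiv n/\varphi^2 \pmod 1$ and $\varphi^2$ is irrational, $T^n(0)=0$ only for $n=0$, so the unique orbit segment with three consecutive visits to $E$ is $T^0(0),T^1(0),T^2(0)$, giving the four-term run $R(0)\le R(1)\le R(2)\le R(3)$, i.e. $k=3$ at $n=0$. Finally $T^3(0)=1/\varphi^4\notin E$, whence $\{0\}\cap T^{-3}E=\emptyset$ and no orbit segment meets $E$ four times in a row, ruling out any run of five or more terms.

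The main obstacle is that the entire conclusion rests on a single orbit point: the witnessing run at $n=0$ lives on the boundary of $E$, since $0$ and $1/\varphi^2$ are endpoints at which $h=1$. Consequently the argument cannot be run only up to measure zero, and unique ergodicity is of no use here, as a frequency statement is blind to one orbit point. I therefore expect the delicate part to be the endpoint bookkeeping: every intersection above must be computed with its open and closed endpoints pinned down exactly, so that $E\cap T^{-1}E\cap T^{-2}E$ genuinely collapses to the point $\{0\}$ rather than to a short interval. It is also necessary to confirm that the orbit of $0$ never lands on the two points $-1/\varphi^2$ and $1/\varphi^3$ at which $h$ is undefined; this follows from $T^n(0)\equiv n/\varphi^2 \equiv -n\varphi \pmod 1$ together with the irrationality of $\varphi$, since those two points are congruent to $\varphi$ and $2\varphi$ modulo $1$, and equality would force $(n+1)\varphi$ or $(n+2)\varphi$ to be an integer.
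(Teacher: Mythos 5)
Your proof is correct and takes essentially the same approach as the paper: both reduce the statement to asking how long the orbit of $0$ under $T$ can remain in the region $E=\{y:h(y)\ge 1\}=\left(\frac{-1}{\varphi^2},0\right]\cup\left[\frac{1}{\varphi^2},\frac{1}{\varphi}\right)$ (the paper phrases this as $\log h\ge 0$ there), and both exploit the fact that the complementary interval $\left(0,\frac{1}{\varphi^2}\right)$ has length exactly equal to the rotation angle, so the orbit can cross it while staying in $E$ only by sitting exactly at $0$. Your explicit computation of $E\cap T^{-1}E\cap T^{-2}E=\{0\}$, together with the endpoint and irrationality checks, is a more careful write-up of the paper's terse one-line argument rather than a different method.
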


\begin{proof}
The function $h$ is non negative on $[\frac{-1}{\varphi^2},0]\cup [\frac{1}{\varphi^2},\frac{1}{\varphi}].$ It is strictly negative on the complement of this set. Our irrational rotation can only jump over the negative interval from the point $0$. So the longest orbit segment which remains in the increasing region is $0, \frac{1}{\varphi^2},\frac{2}{\varphi^2}-1.$ This corresponds to terms $0,1,2,3$ of the sequence.
\end{proof}


Our method involves first converting the problem to one involving sums of powers of the golden mean. Once that has been done, we use techniques developed in our work with Batsis \cite{BK1,BK2} to prove Theorem \ref{Thm1}. Analysis of the matrices involved gives the devil's staircase structure of $h$, which was a surprise to us, and which leads easily to Theorem \ref{Thm2} and its corollaries.

\section{A Dynamical Approach to $R$}

The $n$th Fibonacci number can be written
\begin{equation}\label{FnForm}
F_n=\dfrac{\varphi^n-\psi^n}{\sqrt{5}}
\end{equation}
where $\psi=\dfrac{1-\sqrt{5}}{2}=\dfrac{-1}{\varphi}$. The function $R(n)$ counts ways to write $n$ as the sum of distinct Fibonacci numbers, i.e. as a sum of the form
\[
n=\sum_{i=1}^k a_iF_{k+2-i}
\]
where each $a_i\in\{0,1\}$. This contains terms from the set $\{F_i:i\geq 2\}$, since $F_1=F_2=1$. 

It is useful to us to separate out the two terms in (\ref{FnForm}), not least because sums of powers of Pisot numbers (such as the golden mean) have been extensively studied. We begin with a lemma which lets us separate out these terms.

\begin{lemma}\label{xnDef}
Let $n\in\mathbb N$. For any $a_1\cdots a_k\in\{0,1\}^k$ with \begin{equation}\label{xndef}\sum_{i=1}^k a_iF_{k+2-i}=n,\end{equation} we let
\[
x_n=\sum_{i=1}^k a_i\varphi^{k+2-i}
\]
and
\[
y_n=\sum_{i=1}^k a_i \psi^{k+2-i}.
\]
Then $x_n$ and $y_n$ are well defined functions of $n$ which are independent of the choice of $a_1 \cdots a_k$ satisfying (\ref{xndef}). Furthermore, the partition function $R$ which we study also satisfies
\begin{align*}
R(n)&=\#\{a_1\cdots a_k\in\{0,1\}^k:k\in\mathbb N, a_1=1, x_n=\sum_{i=1}^k a_i\varphi^{k+2-i}\}\\
&=\#\{a_1\cdots a_k\in\{0,1\}^k:k\in\mathbb N, a_1=1, y_n=\sum_{i=1}^k a_i\psi^{k+2-i}\}
\end{align*}
\end{lemma}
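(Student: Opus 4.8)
The plan is to exploit the identity $\zeta^m = F_m\zeta + F_{m-1}$, valid for both $\zeta = \varphi$ and $\zeta = \psi$ since each satisfies $\zeta^2 = \zeta+1$; I would verify it by a one-line induction. Applying it to the defining sums and using (\ref{xndef}) gives
\[
x_n = \sum_{i=1}^k a_i\varphi^{k+2-i} = n\varphi + A, \qquad y_n = \sum_{i=1}^k a_i\psi^{k+2-i} = n\psi + A,
\]
where $A = \sum_{i=1}^k a_i F_{k+1-i}\in\mathbb Z$. The crucial point is that the \emph{same} integer $A$ appears in both expressions, so it suffices to show that $A$ does not depend on the chosen representation of $n$.

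For well-definedness I would bound $y_n$. Writing $\psi = -1/\varphi$, the sum $y_n$ selects a finite subset of the powers $\{\psi^m : m\ge 2\}$; the positive (even) powers sum to at most $\sum_{j\ge1}\psi^{2j} = \psi^2/(1-\psi^2) = 1/\varphi$, and the negative (odd) powers to at least $\sum_{j\ge1}\psi^{2j+1} = \psi^3/(1-\psi^2) = -1/\varphi^2$, both inequalities strict because a finite sum omits infinitely many terms. Hence $y_n$ lies in the open interval $(-1/\varphi^2, 1/\varphi)$, which has length exactly $1$ — this is the reason behind the choice of domain for $T$. If two representations of the same $n$ gave integers $A$ and $A'$, then $y_n - y_n' = A - A'\in\mathbb Z$, while both $y_n$ and $y_n'$ sit in an open interval of length $1$, forcing $|A - A'| < 1$ and hence $A = A'$. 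Thus $x_n$ and $y_n$ are well defined. I expect this interval bound, together with the observation that an open interval of length $1$ can contain at most one point of any coset of $\mathbb Z$, to be the main step.

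For the counting statement I would set up the bijection between strings $a_1\cdots a_k$ with $a_1 = 1$ and finite subsets $S\subseteq\{2,3,\dots\}$: given $S$ with $\max S = M$, take $k = M-1$ and $a_i = 1$ iff $M+1-i\in S$, so that $a_1 = 1$ records $F_M\in S$ and $\sum_i a_i F_{k+2-i} = \sum_{j\in S} F_j$. It then remains to check that, for such a string, the condition $x_n = \sum_i a_i\varphi^{k+2-i}$ is equivalent to $\sum_i a_i F_{k+2-i} = n$. Writing the $\varphi$-sum as $m\varphi + A_S$ with $m = \sum_i a_i F_{k+2-i}$ and comparing with $x_n = n\varphi + A_n$ using the $\mathbb Q$-linear independence of $\{1,\varphi\}$, equality forces $m = n$; conversely $m = n$ makes $S$ a partition of $n$, whence $A_S = A_n$ by the well-definedness already proved. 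So the two conditions coincide, the strings counted are exactly the Fibonacci partitions of $n$, and the count is $R(n)$. The identical argument with $\{1,\psi\}$ in place of $\{1,\varphi\}$ (equivalently, Galois conjugation $\varphi\mapsto\psi$) handles the $y_n$ formula.
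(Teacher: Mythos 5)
Your proof is correct, and it takes a genuinely different route from the paper's. The paper reduces the lemma to the equivalence of the $F$-, $\varphi$- and $\psi$-sum equalities, obtains the $\varphi\leftrightarrow\psi$ equivalence from Galois conjugation (as you also note one can), but defers the essential implication --- that all Fibonacci representations of $n$ yield the same $\varphi$-sum --- to part 3 of Proposition \ref{StructureProp}: the map $g(x,y)=\frac{x-y}{\sqrt{5}}$ is an order-preserving bijection from $\overline X$ to $\mathbb N\cup\{0\}$, whose proof in turn rests on the cut-and-project description of $\overline X$ and the successor map, with details cited from \cite{BK1}. You replace that machinery by the identity $\zeta^m=F_m\zeta+F_{m-1}$, which exhibits $x_n=n\varphi+A$ and $y_n=n\psi+A$ with the \emph{same} integer $A$, and you then pin down $A$ by observing that every finite sum $\sum_i a_i\psi^{k+2-i}$ lies in the open interval $\left(\frac{-1}{\varphi^2},\frac{1}{\varphi}\right)$, whose length is exactly $1$, so it meets each coset of $\mathbb Z$ at most once; your counting step then only needs the irrationality of $\varphi$ (respectively $\psi$) to force $m=n$. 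This is elementary and self-contained, and it isolates precisely what is special about the golden mean --- the conjugate attractor has length exactly $1$ --- which is the paper's footnote about other Pisot recurrences in disguise: for a general Pisot recurrence the analogous interval can be too long and the dichotomy breaks. What the paper's route buys is economy in context rather than in the lemma itself: the cut-and-project structure, the successor map and the bijection $g$ are all needed later for Theorems \ref{Thm1} and \ref{Thm2}, so no work is wasted, and your interval computation appears there implicitly as the identification of $\left[\frac{-1}{\varphi^2},\frac{1}{\varphi}\right]$ with the attractor of the IFS $\{S_0,S_1\}$.
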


\begin{proof}
It suffices to show that if $a_1\cdots a_k$ and $b_1\cdots b_j$ are two finite words made of $0$s and $1$s then
\begin{align}
\sum_{i=1}^k a_iF_{k+2-i}&=\sum_{i=1}^jb_iF_{j+2-i}\label{0eq}\\
\iff \sum_{i=1}^k a_i\varphi^{k+2-i}
&=\sum_{j=1}^m b_i\varphi^{j+2-i}\label{1steq}\\
\iff \sum_{i=1}^k a_i \psi^{k+2-i}&=\sum_{i=1}^j b_i \psi^{j+2-i}.
\end{align}

The final equivalence here holds because $\varphi$ and $\psi$ have the same minimal polynomial. The fact that (\ref{0eq}) implies (\ref{1steq}) is proved in part 3 of Proposition \ref{StructureProp}\footnote{ It's worth noting that analogous statements don't always hold if we replace $F_n$ with other recurrence sequences whose growth is governed by a Pisot number.}.\end{proof}

We define 
\[
\overline X=\left\{(x_n,y_n):n\in\mathbb N\right\}.
\]
The set $\overline X$ is well studied, it forms a strip through a lattice, see Figure \ref{Fig2}. 

We let maps $T_0, T_1, T_{-1}, S_0, S_1:\mathbb R\to\mathbb R$ be given by \[T_i(x)=\varphi x+i\varphi^2\] and \[S_i(y)=\psi y+i\psi^2.\]

These maps satisfy that
\begin{equation}\label{TStructure}
    \sum_{i=1}^k (a_i\varphi^{k+2-i},a_i\psi^{k+2-i})=(T_{a_1}\circ \cdots \circ T_{a_k}(0),S_{a_1}\circ \cdots \circ S_{a_k}(0)).
\end{equation}
and so
\begin{equation}\label{XStructure}
\overline X=\{(T_{a_1}\circ \cdots \circ T_{a_k}(0),S_{a_1}\circ \cdots \circ S_{a_k}(0)):k\geq 1, a_i\in\{0,1\}\}.
\end{equation}

The next proposition shows the relationship between $\overline X$ and $\mathbb N$ as well as describing nearest neighbour dynamics on $\overline X$ which will later allow us to compute $R(n)$ easily.


\begin{prop}\label{StructureProp} The following hold:
\begin{enumerate}
\item The set $\overline X$ can be expressed \begin{equation}\label{CPStructure}\overline{X}=\left\{\left(n+m\varphi, n+m\psi\right):~n,m\in\mathbb Z,~n+m\psi\in\left[\dfrac{-1}{\varphi^2},\frac{1}{\varphi}\right]\right\}.\end{equation}
\item The successor map $s:\overline{X}\to\overline{X}$ which maps $(x_n,y_n)$ to $(x_{n+1},y_{n+1})$ acts by
\[
s(x,y)=\left\lbrace\begin{array}{cc}(x+1+\varphi,y+1+\psi)& y \in \left[\dfrac{-1}{\varphi^2},\dfrac{1}{\varphi^3}\right) \\ 
(x+\varphi,y+\psi) & y\in \left[\dfrac{1}{\varphi^3},\dfrac{1}{\varphi}\right]\end{array}\right. .
\]
\item The map $g:\overline X\to\mathbb N\cup\{0\}$, $g(x,y)=\dfrac{x-y}{\sqrt{5}}$ is an order preserving bijection mapping $(x_n,y_n)$ to $n$.
\end{enumerate}
\end{prop}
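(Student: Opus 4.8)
The plan is to argue entirely from the set description (\ref{XStructure}) of $\overline X$, \emph{not} from Lemma \ref{xnDef} (whose proof forward-references the present part 3), and to reduce all three parts to the single elementary fact that the window $\left[\frac{-1}{\varphi^2},\frac{1}{\varphi}\right]$ has length exactly $\frac{1}{\varphi}+\frac{1}{\varphi^2}=1$. First I would record the algebraic backbone: iterating $\varphi^2=\varphi+1$ and $\psi^2=\psi+1$ gives $\varphi^m=F_m\varphi+F_{m-1}$ and $\psi^m=F_m\psi+F_{m-1}$ for every $m$. Feeding these into (\ref{XStructure}) collapses each generator word to a lattice point,
\[
\left(\sum_{i} a_i\varphi^{k+2-i},\ \sum_{i} a_i\psi^{k+2-i}\right)=\left(p+q\varphi,\ p+q\psi\right),\qquad p=\sum_{i} a_iF_{k+1-i},\ \ q=\sum_{i} a_iF_{k+2-i},
\]
with $p,q\in\mathbb Z$. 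This shows $\overline X$ lies in the lattice $\{(p+q\varphi,p+q\psi):p,q\in\mathbb Z\}$, and since $q=\sum_i a_iF_{k+2-i}=n$ by (\ref{xndef}) it also gives $g(x,y)=\frac{x-y}{\sqrt5}=\frac{q(\varphi-\psi)}{\sqrt5}=q=n$, which is the value asserted in part 3.

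To finish the inclusion ``$\subseteq$'' of part 1 I would bound the internal coordinate $y_n=\sum_i a_i\psi^{k+2-i}$, a finite $\{0,1\}$-sum of the powers $\psi^2,\psi^3,\dots$. Because $\psi<0$, discarding the negative odd-indexed terms and completing the positive even-indexed geometric series gives $y_n\le\sum_{j\ge 2,\ j\ \mathrm{even}}\psi^j=\frac{\psi^2}{1-\psi^2}=\frac{1}{\varphi}$, and symmetrically $y_n\ge\sum_{j\ge 3,\ j\ \mathrm{odd}}\psi^j=\frac{\psi^3}{1-\psi^2}=\frac{-1}{\varphi^2}$, so $y_n$ lands in the window. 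The heart of the argument, and exactly the implication $(\ref{0eq})\Rightarrow(\ref{1steq})$ owed to Lemma \ref{xnDef}, is injectivity. The map $(p,q)\mapsto(p+q\varphi,p+q\psi)$ has determinant $\psi-\varphi=-\sqrt5\ne0$, so a lattice point determines $(p,q)$ uniquely; for fixed $q$ the internal coordinate $p+q\psi$ runs through the coset $q\psi+\mathbb Z$, whose points are spaced by $1$. Since the window has length exactly $1$, it meets each such coset in a single point. Hence distinct window lattice points carry distinct $q$, so $g$ is injective on $\overline X$; equivalently every representation of a given $n$ yields the same $(x_n,y_n)$.

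Surjectivity of $g$ onto $\mathbb N\cup\{0\}$ follows from the existence of a Zeckendorf partition of each $n\ge 0$, which realises the window lattice point with $q=n$; combined with injectivity this simultaneously delivers the reverse inclusion ``$\supseteq$'' of part 1. Here one must note that the points of $\overline X$ are exactly the window lattice points with $q\ge 0$ (equivalently $x\ge 0$, which is forced since $x_n=\sum_i a_i\varphi^{k+2-i}\ge0$): the two boundary values $\frac{1}{\varphi}$ and $\frac{-1}{\varphi^2}$ both force $q=-1$, so no boundary ambiguity arises for $q\ge0$ and the closed window is harmless there. Order preservation is then immediate from $x=y+q\sqrt5$: since $y$ is confined to an interval of length $1$ while $\sqrt5>1$, we get $q<q'\iff x<x'$, so $g$ respects the left-to-right order on the strip.

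Finally, part 2 follows by locating the successor. By order preservation the successor of a point with internal coordinate $y$ is the unique window lattice point whose $q$ is one larger; writing the resulting change in internal coordinate as $\psi+(p'-p)$ and insisting it stay inside the length-$1$ window forces $p'-p\in\{0,1\}$, i.e. the increment is either $(\varphi,\psi)$ (shifting $y$ by $\psi$) or $(\varphi^2,\psi^2)=(1+\varphi,1+\psi)$ (shifting $y$ by $\psi^2=\frac{1}{\varphi^2}$). Comparing each candidate against the window endpoints shows the $(\varphi,\psi)$-increment is admissible exactly when $y\ge\frac{1}{\varphi^3}$ and the $(\varphi^2,\psi^2)$-increment exactly when $y\le\frac{1}{\varphi^3}$, which is the stated dichotomy, and the apparent overlap at $y=\frac{1}{\varphi^3}$ is vacuous because that value forces $q=-2\notin\mathbb N$. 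I expect the main obstacle to be not any single computation but the bookkeeping around the window: isolating ``length exactly $1$'' as the true source of uniqueness, and correctly matching $\overline X$ to the $q\ge0$ (equivalently $x\ge0$) sub-collection of the closed-window lattice. Once uniqueness is in hand, parts 2 and 3 drop out as routine consequences of part 1.
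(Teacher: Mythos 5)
Your proposal is correct, but it takes a genuinely different route from the paper's proof. The paper treats parts 1 and 2 as standard facts from arithmetic dynamics: the inclusion $\overline X\subseteq\{(a+b\varphi,a+b\psi)\}$ comes from collapsing via the Fibonacci recurrence together with the observation that $S_0,S_1$ are contractions whose attractor is $\left[\frac{-1}{\varphi^2},\frac{1}{\varphi}\right]$; the reverse inclusion and part 2 are delegated to Section 3.1 of \cite{BK1} (pulling points back toward the origin by $(T_i^{-1},S_i^{-1})$, and analysing the difference set $\overline X-\overline X$); part 3 is then the one-line verification $g(0,0)=0$ and $g(s(x,y))=g(x,y)+1$. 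You instead hang everything on a single elementary lemma: the window has length exactly $1$, so each coset $m\psi+\mathbb Z$ meets the closed window in exactly one point unless it contains the endpoints, and the endpoints force $m=-1$. Combined with $\varphi^{j}=F_j\varphi+F_{j-1}$, $\psi^{j}=F_j\psi+F_{j-1}$ and Zeckendorf existence for surjectivity, this yields all three parts self-containedly, with part 2 falling out of the same uniqueness statement rather than a separate difference-set analysis. Your approach buys something real beyond avoiding the citation: by working from the representation-independent description (\ref{XStructure}), you obtain the well-definedness of $(x_n,y_n)$, i.e.\ the implication $(\ref{0eq})\Rightarrow(\ref{1steq})$ of Lemma \ref{xnDef}, as a byproduct, which untangles the circular referencing in the paper (Lemma \ref{xnDef} cites part 3 of this proposition, while the indexed definition of $\overline X$ already presupposes that lemma).

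One further point in your favour: your insistence on the restriction $m\ge 0$ (equivalently $x\ge 0$) is not mere bookkeeping but an actual repair of the statement. As literally written, the right-hand side of (\ref{CPStructure}) contains, for every $m<0$, at least one point not in $\overline X$ --- for instance $\left(-\varphi,\frac{1}{\varphi}\right)$ for $m=-1$, or $\left(-1-2\varphi,\frac{1}{\varphi^3}\right)$ for $m=-2$ --- whereas every point of $\overline X$ has nonnegative first coordinate. So what you prove is the corrected identity, with $n+m\varphi\ge 0$ adjoined to the conditions on the right-hand side; this amendment (which also makes part 1 consistent with the bijection onto $\mathbb N\cup\{0\}$ in part 3) deserves to be recorded in the paper.
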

\begin{proof} Parts 1 and 2 are standard results in arithmetic dynamics, indeed the generalisation in which $\varphi$ is replaced with the largest root of $x^3=x^2+x+1$ gave rise to the Rauzy fractal \cite{Rauzy}, which in turn sparked a great deal of study relating dynamics and the arithmetic of algebraic integers. 

To prove part 1, we first note that, by repeatedly applying \[(\varphi^{n+2},\psi^{n+2})=(\varphi^{n+1},\psi^{n+1})+(\varphi^n,\psi^n)\] to the largest terms in the polynomials defining $(x_n,y_n)$, we can express any point $(x,y)$ in $\overline X$ as
\[
(x,y)=(a+b\varphi,a+b\psi)
\]
for some choices of $a,b\in\mathbb Z$. The maps $S_0$ and $S_1$ which generate the points $y_n$ are contractions, and in particular no points $S_{a_1}\circ\cdots S_{a_k}(0)$ can escape the interval $[-\psi^2,-\psi]=[-\frac{1}{\varphi^2},\frac{1}{\varphi}]$ (which is the attractor of the iterated function system $\{S_0, S_1\}$). Thus the $y$ coordinates of points in $\overline X$ are contained in this interval and we see 
\[
\overline X\subset \left\{\left(a+b\varphi, a+b\psi\right):~a,b\in\mathbb Z,~a+b\psi\in\left[\dfrac{-1}{\varphi^2},\frac{1}{\varphi}\right]\right\}
\]

Proving the reverse inclusion is a short argument which can be found, for example, in section 3.1 of \cite{BK1}. Essentially one shows that for any point $(x,y)$ in the set on the right hand side of equation \ref{CPStructure} one can map $(x,y)$ to either $(T_0^{-1}(x), S_0^{-1}(y))$ or $(T_1^{-1}(x), S_1^{-1}(y))$ without leaving the set, and by doing this enough times one ends up in a finite set around $(0,0)$. It then remains to check by hand that each of these finitely many points can be reached from $(0,0)$ by applying maps $(T_i,S_i)$, and so they are contained in $\overline X$.

Part 2, which is again proved carefully in Section 3.1 of \cite{BK1}, follows directly from seeing that the set $\overline X-\overline X$ of differences between pairs of points in $\overline X$ has a discrete structure similar to that of $\overline X$ and analysing the smallest positive elements of this set.

To check part 3 we just check that $g(0,0)=0$ and $g(s(x,y))=g(x,y)+1$ for all $(x,y)\in\overline X$, which is a direct consequence of the form taken by the successor map $s$.


\end{proof}
\begin{figure}\label{Fig2}
    \centering
    \includegraphics[scale=0.6]{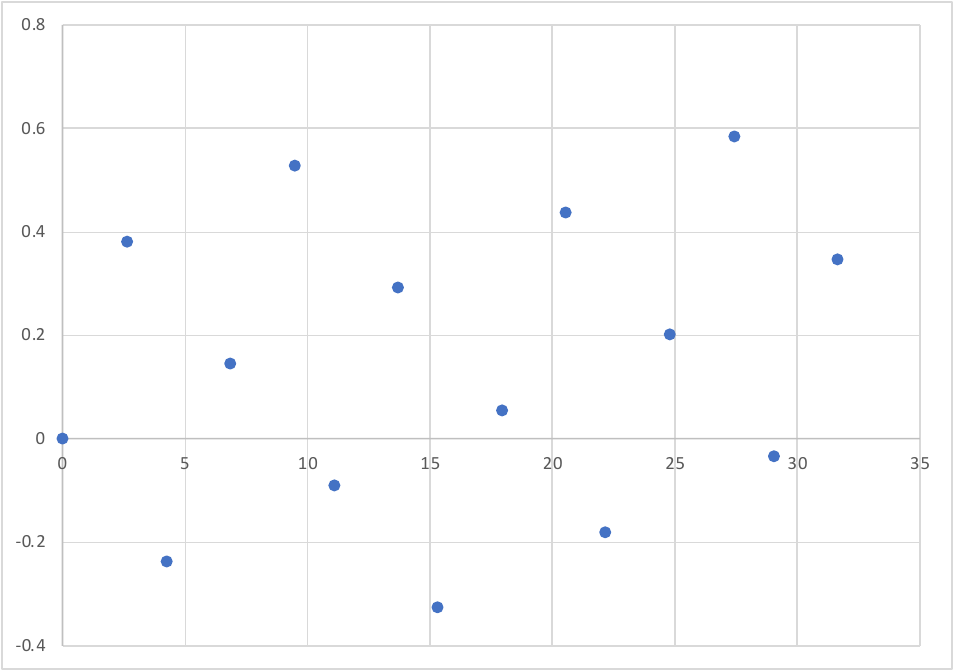}
    \caption{The first part of the set $\overline X=\{(x_n,y_n):n\in\mathbb N\}$}
    \label{fig:enter-label}
\end{figure}

The Zeckendorf representation \cite{Zeckendorf} of $n\in\mathbb N$ is the unique representation $n=F_{a_1}+F_{a_2}+\cdots F_{a_k}$ where each $a_i\geq 2$ and $a_{i+1}\geq a_i+2$. That is, the Zeckendorf representation gives the unique partition of $n$ into non-consecutive Fibonacci numbers $F_n$. This is analagous to the greedy $\varphi$-expansion of $x$. We write $<n>=b_1...b_k$ where $b_i=1$ if $F_{k+2-i}$ is in the Zeckendorf representation, $b_i=0$ otherwise. In this notation, each integer can be written in the form
$<n>=10^{d_1}10^{d_2}\cdots 10^{d_r}$ with $d_r\geq 0$ and all other $d_i\geq 1$.

We now show how information about $R(n)$ and $R(n-1)$ can be read off from a product of matrices involving the Zeckendorf representation. Our result is very much in the spirit of Berstel \cite{Berstel}, but our coding is different in order to allow information on both $R(n)$ and $R(n-1)$ in the same matrix product. 

Let \[
A_1=\left(\begin{array}{ccc}1&0&1\\0&0&1\\0&0&0\end{array}\right),~A_0=\left(\begin{array}{ccc}1&0&0\\1&0&1\\0&1&0\end{array}\right).
\]

\begin{prop}\label{MatrixProp}
Suppose $<n>=b_1\cdots b_k$. Then
\[
R(n)=(1~0~0)A_{b_1}\cdots A_{b_k} \left(\begin{array}{c} 1\\0\\0\end{array}\right)
\]
and
\[
R(n-1)=(1~0~0)A_{b_1}\cdots A_{b_k} \left(\begin{array}{c} 0\\1\\1\end{array}\right).
\]
\end{prop}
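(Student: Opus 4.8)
The plan is to read the Zeckendorf word $b_1\cdots b_k$ from left to right and to prove a single invariant for the growing row vector $\rho_j:=(1\ 0\ 0)A_{b_1}\cdots A_{b_j}$, from which both displayed formulas fall out simultaneously at $j=k$. For $1\le j\le k$ write $n^{(j)}=\sum_{i=1}^{j}b_iF_{j+2-i}$ for the integer whose Zeckendorf word is the prefix $b_1\cdots b_j$, so that $n^{(k)}=n$. I would prove by induction on $j$ that
\[
(\rho_j)_1=R\big(n^{(j)}\big) \qquad\text{and}\qquad (\rho_j)_2+(\rho_j)_3=R\big(n^{(j)}-1\big).
\]
Taking $j=k$ and pairing $\rho_k$ with $(1,0,0)^{T}$ and with $(0,1,1)^{T}$ yields exactly the two claimed identities; this is precisely the mechanism that lets one matrix product carry both $R(n)$ and $R(n-1)$, as advertised.

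A one-line computation records the row actions $(r_1,r_2,r_3)A_0=(r_1+r_2,\,r_3,\,r_2)$ and $(r_1,r_2,r_3)A_1=(r_1,\,0,\,r_1+r_2)$. Since $\rho_1=(1\ 0\ 0)A_1=(1,0,1)$ and both maps preserve the property that at most one of the last two coordinates is nonzero ($A_0$ swaps them, $A_1$ kills the middle one), I would strengthen the induction hypothesis to record \emph{which} coordinate carries the value $R(n^{(j)}-1)$: the third when the number of trailing zeros of $b_1\cdots b_j$ is even (equivalently when the least Fibonacci index occurring in the Zeckendorf form of $n^{(j)}$ is even), the second when it is odd. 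The base case $j=1$ (where $b_1=1$, $n^{(1)}=1$, and $\rho_1=(1,0,1)$ encodes $R(1)=R(0)=1$ in the third coordinate) is immediate.

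The inductive step carries the real content. Appending a digit replaces $n^{(j)}$ by $n^{(j+1)}=L\big(n^{(j)}\big)+b_{j+1}$, where $L$ is the Fibonacci index shift $\sum F_{a_i}\mapsto\sum F_{a_i+1}$, and the Zeckendorf ban on consecutive ones forces $b_{j+1}=1$ to occur only after a $0$. Matching the two row actions to this update reduces to three facts about the behaviour of $R$ under $L$: that $R(L(m)-1)=R(m-1)$ always, that $R(L(m)+1)=R(m)$ whenever the append is legal, and that $R(L(m))$ equals $R(m)$ or $R(m)+R(m-1)$ according to the parity of the least index of $m$ (the latter being exactly the coordinate toggling above). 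I would prove these by analysing representations of $L(m)$ through the elementary split move $F_i\to F_{i-1}+F_{i-2}$ applied at the least significant end: because $L(m)$ has no $F_2$ term, the only freedom at the bottom is whether to keep splitting the current lowest term, and the parity of the least index decides whether this cascade terminates cleanly at $F_2$ (contributing the extra $R(m-1)$) or not.

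The hard part will be precisely this last step: turning the informal ``splitting cascade'' into honest bijections between the representation sets of $m,m-1$ and those of $L(m),L(m)\pm1$, and verifying that the representations distribute among the coordinates of $\rho_{j+1}$ exactly as $A_0,A_1$ dictate, slot toggling included. I expect the cleanest execution to be not purely combinatorial but via Lemma~\ref{xnDef} and Proposition~\ref{StructureProp}: counting representations of $n^{(j)}$ is counting the $\psi$-expansions of $y_{n^{(j)}}$, and the inverse maps $S_0^{-1},S_1^{-1}$ of the contracting system organise these into finitely many local states driven by the Zeckendorf digits. The transfer-operator bookkeeping of \cite{BK1,BK2} should then produce the matrices $A_0,A_1$ and pin down the boundary vectors $(1,0,0)^{T}$ and $(0,1,1)^{T}$ directly, bypassing the delicate hand-verification of the parity-dependent recursion.
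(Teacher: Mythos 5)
Your induction framework is correct as far as it goes: the invariant $(\rho_j)_1=R(n^{(j)})$, $(\rho_j)_2+(\rho_j)_3=R(n^{(j)}-1)$, the slot-parity bookkeeping, and the reduction of the inductive step to the three shift identities --- $R(L(m)-1)=R(m-1)$, $R(L(m)+1)=R(m)$ for legal appends, and $R(L(m))=R(m)$ or $R(m)+R(m-1)$ according to the parity of the least Zeckendorf index of $m$ --- all check out against the row actions $(r_1,r_2,r_3)A_0=(r_1+r_2,\,r_3,\,r_2)$ and $(r_1,r_2,r_3)A_1=(r_1,\,0,\,r_1+r_2)$. The genuine gap is that these three identities are never proved, and they carry essentially all of the content of the proposition: a reader who could prove them would already have the matrix statement. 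Your ``splitting cascade'' ($F_i\to F_{i-1}+F_{i-2}$ applied at the least significant end) is described only informally, no bijection is constructed, and the termination/parity analysis is exactly the delicate point; the closing suggestion that Lemma \ref{xnDef}, Proposition \ref{StructureProp} and the machinery of \cite{BK1,BK2} ``should'' produce the matrices and the boundary vectors is a plan, not an argument. So the proposal is a correct reorganisation of the problem, not a proof of it.

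For what it is worth, the paper's actual proof sidesteps induction entirely, and it is the route your last paragraph gestures at. Fix the full word $b_1\cdots b_k$, let $a_1\cdots a_k$ be an arbitrary competing representation of $n$, and set $c_i=b_i-a_i$; by Lemma \ref{xnDef} the representation condition becomes $T_{c_k}\circ\cdots\circ T_{c_1}(0)=0$, and the key input (Lemma 3.1 of \cite{AFKP}) is that every partial composition $T_{c_j}\circ\cdots\circ T_{c_1}(0)$ lies in the finite set $\{0,\varphi,\varphi^2\}$. The matrices $A_0,A_1$ are then literally the transition matrices of this three-state automaton (with $c_i\in\{0,1\}$ allowed when $b_i=1$ and $c_i\in\{-1,0\}$ when $b_i=0$), so $(A_{b_1}\cdots A_{b_k})(1,1)$ counts representations of $x_n$, i.e.\ equals $R(n)$, with no recursion on $R$ needed; entries $(1,2)$ and $(1,3)$ count representations of $x_n-\varphi$ and $x_n-\varphi^2$ respectively, and by the successor map of Proposition \ref{StructureProp} exactly one of these two values is $x_{n-1}$ while the other count is zero, which is what makes the pairing with the vector $(0,1,1)^{T}$ return $R(n-1)$. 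If you want to salvage your write-up, the shortest completion is to prove your three identities by exactly this path-counting argument --- but at that point the induction is redundant and you have reproduced the paper's proof.
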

It is important here that we use the Zeckendorf coding, and not another partition of $n$ by Fibonacci numbers.
\begin{proof}
Let $<n>=b_1\cdots b_k$ and suppose that we have another partition $n=\sum_{i=1}^k a_iF_{k+2-i}$ with each $a_i\in\{0,1\}$. Then by Lemma \ref{xnDef}
\[
x_n=\sum_{i=1}^k b_i\varphi^{k+2-i}=\sum_{i=1}^k a_i\varphi^{k+2-i},
\]
and so
\begin{equation}\label{ZeroPoly}
\sum_{i=1}^k (b_i-a_i)\varphi^{k+2-i}=0.
\end{equation}
Writing $c_i=b_i-a_i\in\{-1,0,1\}$, we see that (\ref{ZeroPoly}) holds if and only if $T_{c_k}\circ \cdots T_{c_1}(0)=0$, recalling that $T_i(x)=\varphi x +i\varphi^2$. Thus, to compute $R(n)$ where $<n>=b_1\cdots b_k$ we need to count the number of $a_1\cdots a_k\in\{0,1\}^k$ with $T_{c_k}\circ \cdots T_{c_1}(0)=(0)$. The $c_i=b_i-a_i$ are allowed to take values $b_i, b_i-1$, since $a_i\in\{0,1\}$. In particular, if $b_i=1$ then $c_i$ must be in $\{0,1\}$, if $b_i=0$ then $c_i$ must be in $\{-1,0\}$. 

Lemma 3.1 of \cite{AFKP} with $\varphi$ gives, after a short calculation, that for any such $c_1\cdots c_k$ and for any $j\leq k$,
\begin{equation}\label{diffs}
T_{c_j}\circ \cdots T_{c_1}(0)=(0)\in\{0,\varphi,\varphi^2\}.
\end{equation}

Let $v_1=0$, $v_2=\varphi$, $v_3=\varphi^2$. Then we see that the matrices $A_1, A_0$ above satisfy
\[
A_1(i,j)=\left\lbrace\begin{array}{cc} 1 & v_j\in\{T_1(v_i),T_0(v_i)\}\\
0 & \mbox{ otherwise }\end{array}\right. 
\]
and
\[
A_0(i,j)=\left\lbrace\begin{array}{cc} 1 & v_j\in\{T_0(v_i),T_{-1}(v_i)\}\\
0 & \mbox{ otherwise }\end{array}\right. .
\]
Then given a Zeckendorf representation $b_1\cdots b_k$, $(A_{b_1}\cdots A_{b_k})(1,1)$ counts exactly the number of $a_1\cdots a_k\in\{0,1\}^k$ with $\sum_{i=1}^k(b_i-a_i)\varphi^{k+2-i}=0$, which is exactly $R(n)$.

The two values $(A_{b_1}\cdots A_{b_k})(1,2)$ and $(A_{b_1}\cdots A_{b_k})(1,3)$ count the number of $a_1\cdots a_k\in\{0,1\}^k$ for which $\sum_{i=1}^k(b_i-a_i)\varphi^{k+2-i}$ equals $\varphi$ or $\varphi^2$ respectively. One of these numbers will be zero, and one will be $R(n-1)$, depending on whether $x_n-x_{n-1}$ is equal to $\varphi$ or equal to $\varphi^2=1+\varphi$. We can see that exactly one of these situation holds by looking at the form of the successor map. Thus we see that the proposition holds.

We are using here that $b_1\cdots b_k$ is the Zeckendorf representation, if it was another representation then it would also be possible for the sum in (\ref{diffs}) to take values $-1,-\varphi$ and so we would need to use $5\times 5$ matrices instead of our $3\times 3$ matrices $A_0$ and $A_1$.

\end{proof}

\section{Proof of Main Theorems}\label{Section3}
Theorem \ref{Thm1} is proved by understanding the projective action of the matrices $A_0, A_1$.

We define $\pi:\{(a~b~c)\in \mathbb R^3:a\neq 0\}\to\mathbb R^2$ by 
\[
\pi(a~b~c)=\left(\frac{b}{a},\frac{c}{a}\right).
\]
Define maps $e_0,e_1:\mathbb R^2\to\mathbb R^2$ by
\[
e_0(x,y)=\left(\frac{y}{1+x},\frac{x}{1+x}\right),~e_1(x,y)=\left(0,1+x\right)
\]
Then we see by inspection that
\[
\pi((a~b~c)A_i)=e_i(\pi(a~b~c))
\]
for $i\in\{0,1\}$. 

Suppose that $<n>=b_1\cdots b_k$, $<m>=b_1\cdots b_kb_{k+1}$ and
\[
(1~0~0)A_{b_1}\cdots A_{b_k}=(a~b~c).
\]

We have seen before that $y_m=S_{b_{k+1}}(y_n)=\frac{-y}{\varphi}+b_{k+1}\frac{1}{\varphi^2}$. 

We also see from our matrices $A_i$ that exactly one of $b$ and $c$ will be non-zero. It follows from the definition of the successor map in Proposition 2.1 part 2 that $b>0$ if and only if $y_n\in\left[\dfrac{-1}{\varphi^2},0\right)$, $c>0$ if and only if $y_n\in\left(0,\dfrac{1}{\varphi}\right]$. 

Before working out $h$, we compute $k(y_n)=\frac{R(n)}{R(n-1)}$ since it is a slightly easier computation. We recover formulas for $h$ using $h(y)=k(T(y))$.

{\bf Case 1:} Suppose that $y_n\in\left[\dfrac{-1}{\varphi^2},0\right]$ and $b_{k+1}=0$. Then $c=0$, \[(a~b~c)A_0=(a~b~0)A_0=(a+b~0~b).\] In this case $R(n)=a$ and $b=R(n-1)$ giving $a=k(y_n)b$. So we have computed $R(m)=a+b$ and $R(m-1)=b$. Then
\[
k\left(\frac{-y_n}{\varphi}\right)=\frac{a+b}{b}=1+k(y_n). 
\]

{\bf Case 2:} Suppose that $y_n\in[0,\frac{1}{\varphi}]$ and $b_{k+1}=0$. Then $b=0$ and
\[(a~b~c)A_0=(R(n)~0~R(n-1))A_0=(R(n)~R(n-1)~0).\]
Then $k\left(\frac{-y_n}{\varphi}\right)=\frac{R(n)}{R(n-1)}=k(y_n).$

{\bf Case 3:}
Suppose that $y_n\in\left[\frac{-1}{\varphi^2},0\right]$ and $b_{k+1}=1$. Then $c=0$ and
\[(a~b~c)A_1=(R(n)~R(n-1)~0)A_1=(R(n)~0~R(n)+R(n-1)).\]
Thus
\[k\left(\frac{-y_n}{\varphi}+\frac{1}{\varphi^2}\right)=\frac{R(n)}{R(n)+R(n-1)}=\frac{k(y_n)}{1+k(y_n)}.
\]

{\bf Case 4:}
Suppose that $y_n\in [0,\frac{1}{\varphi^3}]$ and $b_{k+1}=1$. Then $b=0$ and
\[(a~b~c)A_1=(R(n)~0~R(n-1))A_1=(R(n)~0~R(n)).\]
Then
\[
k\left(\frac{-y_n}{\varphi}+\frac{1}{\varphi^2}\right)=\frac{R(n)}{R(n)}=1.
\]
Note that for case 4, we dealt with $y_n\in [0,\frac{1}{\varphi^3}]$ rather than $[0,\frac{1}{\varphi}]$, this is because we $b_{k+1}$ can only be $1$ if $b_k$ was zero, which puts restrictions on $y_k$. Collecting the four identities above and manipulating them to an easier form gives
\[
k(y)=\left\lbrace \begin{array}{cc}k(-\varphi y)&y\in\left[\dfrac{-1}{\varphi^2},0\right]\\
1+k(-\varphi y)&y\in \left[0,\dfrac{1}{\varphi^3}\right)\\
1&y\in\left[\dfrac{1}{\varphi^3},\dfrac{1}{\varphi^2}\right]\\
\dfrac{k(-\varphi y+\frac{1}{\varphi})}{1+k(-\varphi y+\frac{1}{\varphi})}&y\in\left[\dfrac{1}{\varphi^2},\dfrac{1}{\varphi}\right]\end{array}\right.
\]
Finally, noting that $h(y)=k(T(y))$, we recover form of $h$ given in the original definition. 

The proof of Theorem \ref{Thm1} is completed by noting that 
\[
R(n)=R(0)\prod_{k=0}^{n-1}\dfrac{R(T^{k+1}(0))}{R(T^k(0))}=1\times \prod_{k=0}^{n-1}h(T^k(0))=\exp\left(\sum_{k=0}^{n-1}\log(h(T^k(0)))\right). 
\]

We now turn to the proof of Theorem \ref{Thm2}. Given a patch $P=(p_1,\cdots, p_k)$, we see, by rewriting the definition of the patch, that $R$ contains patch $P$ at time $n$ if and only if $R(n+1)=p_1R(n)$ and $R(n+i)=\frac{p_i}{p_{i-1}}R(n+i-1)$ for $2\leq i \leq k$. Consider the region \[W_{p_1,\cdots,p_k}=\left\{y\in \left(\dfrac{-1}{\varphi^2},\dfrac{1}{\varphi}\right):h(y)=p_1 \mbox{ and }h(T^i(y))=\frac{p_i}{p_{i-1}} \mbox{ for }2\leq i\leq k\right\}
\]
This will be a (possibly empty) union of intervals, since the region upon which $h$ takes some given value is always empty or the union of two intervals.

Then, using again Theorem \ref{Thm1}, we see that $R$ has patch $P$ at time $n$ if and only if $y_n=T^n(0)\in W_{p_1,\cdots,p_k}$. Now using the results of Proposition \ref{StructureProp} gives that $R$ has patch $P$ at time $n$ if and only if
\[
n\in\left\{\frac{n+m\varphi-(n+m\psi)}{\sqrt{5}}:(n,m)\in\mathbb Z^2, n+m\psi\in W_{p_1,\cdots,p_k}\right\}.
\]
This set is a cut and project set, and so the proof of Theorem \ref{Thm2} is complete.

\section{Growth of the function $A(H)$}\label{SlatterySection}

Let $A(H)=\sum_{k=0}^H R(k)$. Chow and Slattery proved that  
\[
\liminf_{H\to\infty} \dfrac{A(H)}{H^{\frac{\log 2}{\log \varphi}}}=c_1
\]
and
\[
\limsup_{H\to\infty} \dfrac{A(H)}{H^{\frac{\log 2}{\log \varphi}}}=c_2
\]
where $c_1\approx 0.52534 \cdots$ and $c_2\approx 0.54338\cdots$. They also plotted a graph of $\dfrac{A(H)}{H^{\frac{\log 2}{\log \varphi}}}$, whose behaviour seems asymptotically log-periodic. We have replotted this graph in Figure 3 and prove this asymptotic log-periodicity.

\begin{theorem}
The function $H\to \dfrac{A(H)}{H^{\frac{\log 2}{\log \varphi}}}$ is asymptotically log-periodic with a limit given in terms of the cumulative distribution function of the Bernoulli convolution associated to $\varphi$.
\end{theorem}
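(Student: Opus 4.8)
The plan is to leverage Theorem~\ref{Thm1}, which expresses $R(n)$ as $\exp\left(\sum_{k=0}^{n-1}\log(h(T^k(0)))\right)$, together with the equidistribution of the orbit $\{T^k(0)\}$ under the uniquely ergodic rotation $T$. Writing $A(H)=\sum_{k=0}^H R(k)$, I would first reinterpret the partial sums in terms of the lattice/strip structure $\overline X$ from Proposition~\ref{StructureProp}. Recall that $g(x,y)=\frac{x-y}{\sqrt 5}$ is an order-preserving bijection from $\overline X$ to $\mathbb N\cup\{0\}$, so summing $R(n)$ over $0\le n\le H$ is equivalent to summing over the points $(x_n,y_n)\in\overline X$ with $g(x_n,y_n)\le H$. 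Since $R(n)$ counts representations and $x_n=\sum a_i\varphi^{k+2-i}$, the quantity $A(H)$ should be expressible as a weighted count of strings, which is the natural bridge to the Bernoulli convolution: the Bernoulli convolution $\nu_\varphi$ associated to $\varphi$ is exactly the distribution of the random series $\sum \pm \varphi^{-i}$, and its mass on intervals records how representation-counts distribute.

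The key steps, in order, would be: First, establish a scaling/self-similarity relation for $A(H)$ driven by the matrix product of Proposition~\ref{MatrixProp}. Because $A_0,A_1$ encode the transition dynamics and $R(n)$ grows like a product along the Zeckendorf word, the map $H\mapsto\varphi H$ (i.e.\ appending a digit, roughly doubling the available strings) should relate $A(\varphi H)$ to $2\,A(H)$ up to a controlled multiplicative error. This is precisely what forces the exponent $\frac{\log 2}{\log\varphi}$: normalising by $H^{\log 2/\log\varphi}$ cancels the leading growth and leaves a function that is, in the limit, invariant under $H\mapsto\varphi H$, i.e.\ log-periodic with period $\log\varphi$. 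Second, I would identify the limiting profile explicitly. The fractional ``phase'' $\{\log H/\log\varphi\}$ parametrises where in one renormalisation period we sit, and the accumulated mass should be read off from the cumulative distribution function $F_{\nu_\varphi}$ of the Bernoulli convolution evaluated at the point determined by that phase, because summing $R$ up to $H$ corresponds to integrating the representation-counting density against Lebesgue measure on the $y$-strip, and that density is governed by $\nu_\varphi$. Third, I would upgrade convergence along the geometric sequence $\varphi^j H_0$ to genuine asymptotic log-periodicity by a standard sandwiching argument: monotonicity of $A$ controls $A(H)$ between $A(\varphi^{\lfloor t\rfloor})$ and $A(\varphi^{\lceil t\rceil})$, and the equidistribution of $T^k(0)$ supplies the uniform error estimates needed to show the oscillations converge to the claimed periodic limit rather than merely being bounded between $c_1$ and $c_2$.

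The main obstacle I anticipate is making the second step rigorous: precisely matching the renormalisation limit of $A(H)/H^{\log 2/\log\varphi}$ to the cumulative distribution function of $\nu_\varphi$, and controlling the error terms uniformly. The difficulty is that $h$ is a devil's staircase, so the summand $\log(h(T^k(0)))$ is only piecewise constant and the Birkhoff sums $\sum_{k<n}\log h(T^k 0)$ fluctuate; one must show these fluctuations, after exponentiation and summation over $n$, average out to something governed by the smooth-on-the-support measure $\nu_\varphi$ rather than accumulating. Concretely, I expect to need a self-affinity identity for $\nu_\varphi$ (the convolution identity $\nu_\varphi = \tfrac12(S_0)_*\nu_\varphi + \tfrac12(S_1)_*\nu_\varphi$, matching the maps $S_0,S_1$ already in play) and a quantitative equidistribution rate for the rotation $T$ to bound the discrepancy between the genuine count $A(H)$ and its continuous Bernoulli-convolution surrogate. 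Once that matching and those error bounds are in hand, the log-periodicity and the identification of the limit follow, and the constants $c_1,c_2$ of Chow and Slattery emerge as the minimum and maximum of the resulting periodic function.
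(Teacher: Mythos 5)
You have guessed the right shape of the answer (a log-periodic profile built from the cumulative distribution function of a Bernoulli convolution, with phase determined by where $H$ sits relative to powers of $\varphi$), but your plan defers exactly the step that constitutes the whole proof --- what you call ``the main obstacle'', matching $A(H)/H^{\log 2/\log\varphi}$ to the CDF --- and the machinery you propose would not deliver it. The paper needs no renormalisation, no Theorem \ref{Thm1}, no matrix products and no equidistribution; it uses one exact combinatorial identity. Fix $k$ with $F_{k-1}\le H<F_k$ and let $\gamma=H/F_{k-1}$. For every $n\le H$ one has $R(n)=\#\{a_1\cdots a_k\in\{0,1\}^k:\sum_{i=1}^k a_i\varphi^{k+2-i}=x_n\}$, and since $n\mapsto x_n$ is order preserving (Proposition \ref{StructureProp}, part 3), summing over $n\le H$ gives \emph{exactly}
\[
A(H)=\#\left\{a_1\cdots a_k\in\{0,1\}^k:\ \sum_{i=1}^k a_i\varphi^{-i}\le \frac{x_H}{\varphi^{k+2}}\right\}.
\]
Because the values of $\sum_{i=1}^\infty a_i\varphi^{-i}$ with fixed prefix $a_1\cdots a_k$ fill an interval of length $\varphi^{1-k}$, the quantity $2^{-k}A(H)$ is sandwiched between $G_{\varphi}\left(x_H/\varphi^{k+2}\right)$ and $G_{\varphi}\left((x_H+\varphi^3)/\varphi^{k+2}\right)$. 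Then $2^{-k}=(\varphi^{-k})^{\log 2/\log\varphi}$, $F_{k-1}\sim\varphi^{k-1}/\sqrt{5}$, $x_H/\varphi^{k+2}\sim\gamma/\varphi^3$, and uniform continuity of $G_{\varphi}$ yield
\[
\frac{A(H)}{H^{\log 2/\log \varphi}}\sim\frac{G_{\varphi}(\gamma/\varphi^3)}{\gamma^{\log 2/\log\varphi}}\,(\varphi\sqrt{5})^{\log 2/\log \varphi},\qquad \gamma\in[1,\varphi),
\]
which is the asserted log-periodic asymptotic.

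The two dynamical tools you lean on would both fail. First, equidistribution of $(T^k(0))$ under the uniquely ergodic rotation controls averages $\frac{1}{H}\sum_{n\le H}F(T^n(0))$ for continuous $F$; but by Theorem \ref{Thm1}, $A(H)=\sum_{n\le H}\exp\left(\sum_{j<n}\log h(T^j(0))\right)$ is a sum of \emph{exponentials of Birkhoff sums}, which is dominated by the exceptionally large values of $R(n)$. Unique ergodicity says nothing about those; controlling them is a large-deviations problem for a cocycle over a rotation, considerably harder than the theorem being proved. Second, the renormalisation relation $A(\varphi H)=2A(H)(1+o(1))$ is too weak even if you could prove it: it only says $f(\varphi H)/f(H)\to 1$ for $f(H)=A(H)/H^{\log 2/\log\varphi}$, which is also satisfied by slowly varying functions with no periodic limit at all, and over the $\sim\log H/\log\varphi$ renormalisation steps the $o(1)$ errors accumulate unless they are summable --- your proposal gives no mechanism for that. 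One must produce the limit profile directly, which is what the exact counting identity above does. (Two smaller corrections: the Bernoulli convolution used here has digits $\{0,1\}$, not $\pm 1$, which changes the support and hence the argument of $G_{\varphi}$; and it arises from the rescaled $x$-coordinate $x_H/\varphi^{k+2}$, not as a density on the $y$-strip --- on the internal space the relevant measure is Lebesgue.)
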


\begin{figure}\label{AFigure}
    \centering
\includegraphics[scale=0.32]{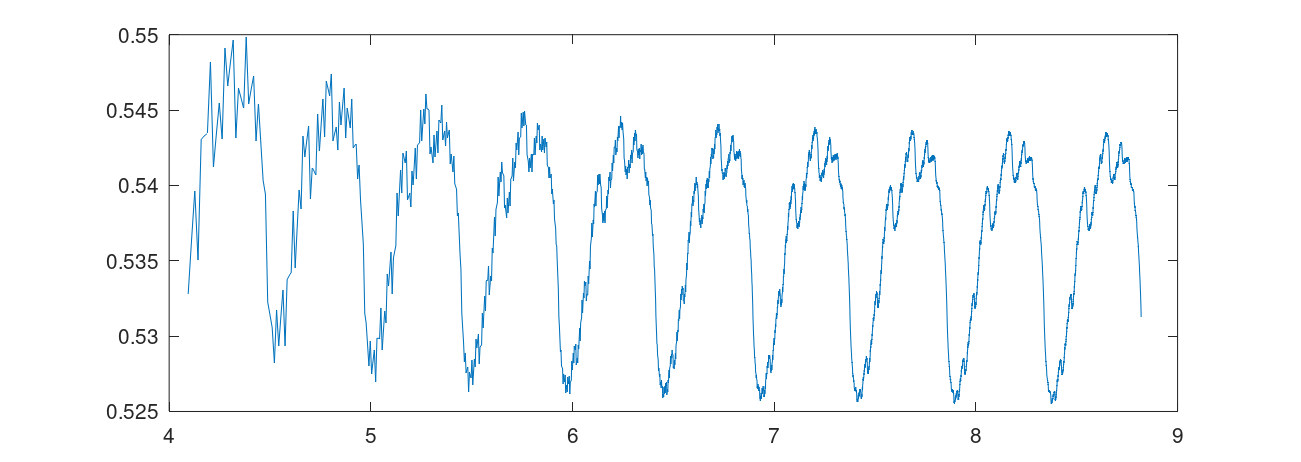}
    \caption{A plot of $\log(H)$ against $\dfrac{A(H)}{H^{\frac{\log 2}{\log \varphi}}}$ for $60\leq H\leq 6765$.}
    \label{fig:enter-label}
\end{figure}

In the following we use $f(k)\sim g(k)$ to mean $\lim_{k\to\infty}\frac{f(k)}{g(k)}=1$.

Let the Bernoulli convolution $\nu_{\varphi}$ be probability measure on $\mathbb R$, supported on $[0,\varphi]$, with  \[\mu(A)=\mathbb P\left(\sum_{i=1}^{\infty}a_i\varphi^{-i}\in A\right)\] where each $a_i\in\{0,1\}$ is picked independently with probability $(1/2,1/2)$. Let $G_{\varphi}$ be the cumulative distribution function given by $G_{\varphi}(x)=\nu_{\varphi}[0,x]$. The Bernoulli convolution $\nu_{\varphi}$ is a continuous (but not absolutely continuous) measure, and so $G_{\varphi}:\mathbb R\to\mathbb R$ is a continuous but not differentiable function. 

Given a fixed prefix $a_1\cdots a_k$, we have \[\left\{\sum_{i=1}^{\infty} a_i\varphi^{-i}:a_{k+1}\cdots \in\{0,1\}^{\mathbb N}\right\}=\left[\sum_{i=1}^k a_i\varphi^{-i},\sum_{i=1}^k a_i\varphi^{-i} +\varphi^{1-k}\right].\]
This gives that for any $x_H$ and any $k\in\mathbb N$
\begin{equation}\label{CumulativeBound}
G_{\varphi}\left(\frac{x_H}{\varphi^{k+2}}\right)\leq\frac{1}{2^k}\#\{a_1\cdots a_{k}\in\{0,1\}^k:\sum_{i=1}^k a_i\varphi^{-i}\leq \frac{x_H}{\varphi^{k+2}}\}\leq G_{\varphi}\left(\frac{x_H+\varphi^3}{\varphi^{k+2}}\right).
\end{equation}
Now let $F_{k-1}\leq H< F_k$ and set $\gamma=\frac{H}{F_{k-1}}$. Then for all $n\leq H<F_k$, $R(n)=\#\{a_1\cdots a_{k}\in\{0,1\}^k:\sum_{i=1}^k a_i\varphi^{k+2-i}=x_n\}$.

Thus the central term in (\ref{CumulativeBound}) satisfies
\begin{align*}
    & \frac{1}{2^k}\#\{a_1\cdots a_{k}\in\{0,1\}^k:\sum_{i=1}^k a_i\varphi^{-i}\leq \frac{x_H}{\varphi^{k+2}}\}\\&=\frac{1}{2^k}\#\{a_1\cdots a_{k}\in\{0,1\}^k:\sum_{i=1}^k a_i\varphi^{k+2-i}\leq x_H\}\\
    &= \frac{1}{2^k}\sum_{i=0}^H R(i)\\
    &=\frac{1}{2^k}A(H)
\end{align*}
But $F_{k-1}\sim \frac{\varphi^{k-1}}{\sqrt{5}}$ and so
\[
\frac{1}{2^k}=\left(\frac{1}{\varphi^k}\right)^{\frac{\log 2}{\log\varphi}}\sim \left(\frac{1}{\varphi\sqrt{5}F_{k-1}}\right)^{\frac{\log 2}{\log\varphi}}
=\left(\frac{1}{H}.\frac{\gamma}{\varphi\sqrt{5}}\right)^{\frac{\log 2}{\log\varphi}}.\]
Also note that \[
\frac{x_H}{\varphi^{k+2}}=\frac{F_{k-1}\gamma}{\varphi^{k+2}}\sim\frac{\gamma}{\varphi^3}.
\]
Combining all of the above gives
\[
\dfrac{A(H)}{H^{\frac{\log 2}{\log \varphi}}}\sim \dfrac{G_{\varphi}
(\frac{\gamma}{\varphi^3})}{\gamma^{\frac{\log 2}{\log\varphi}}}(\varphi\sqrt{5})^{\frac{\log 2}{\log \varphi}}.
\]
Most of this argument could have been given with explicit error bounds, but in the final line we used that $G_{\varphi}(\frac{\gamma}{\varphi^3})$ and $G_{\varphi}(\frac{\gamma}{\varphi^3}+\frac{1}{\varphi^{k+1}})$ are close for large $k$, this follows from the uniform continuity of $G_{\varphi}$ but one would have to work quite hard to get explicit bounds.

As $H$ and hence $k$ grow, $\gamma$ stays approximately in the interval $[1,\varphi]$. Thus we see the $\log$ periodic pattern in Figure 3 is given by $c\frac{G_{\varphi}(x)}{x^{\frac{\log 2}{\log \varphi}}}$ for $x$ in the range $[\frac{1}{\varphi^3},\frac{1}{\varphi^2}]$ and $c$ an explicit constant.

\section*{Acknowedgements}
Many thanks to Sam Chow and Peej Ingarfeld for their useful comments on a previous draft of this article.
 
\bibliographystyle{abbrv} 
\bibliography{Fibonacci}
\end{document}